\documentclass[12pt,reqno]{amsart}

\usepackage{amssymb, amsmath, amsthm}
\usepackage[colorlinks=true, urlcolor=blue, linkcolor=blue, citecolor=blue]{hyperref}
\usepackage[alphabetic,lite,nobysame]{amsrefs}
\usepackage{verbatim}
\usepackage{amscd}   % for commutative diagrams
\usepackage[all]{xy} % for complicated commutative diagrams
\usepackage{youngtab} % for Young tableaux
\usepackage{young} % for Young tableaux
\usepackage{ytableau}
\usepackage{tikz}
\usepackage{ mathrsfs }
\usepackage{cases}
\usepackage{array}
\usepackage{cellspace}
\usepackage{tabu}
\usepackage{calligra,mathrsfs}
\usepackage{bm}
\usepackage{mathtools}
\usepackage{tikz-cd}

\usepackage[margin=1in]{geometry}

\DeclareMathOperator{\ShHom}{\mathscr{H}\text{\kern -3pt {\calligra\large om}}\,}

\newcommand{\CC}{\mathbb{C}}

\newcommand{\m}{\mathfrak{m}}

\newcommand{\Tor}{\operatorname{Tor}}

\newcommand{\Sym}{\operatorname{Sym}}

\newcommand{\coker}{\operatorname{coker}}

\renewcommand{\ker}{\operatorname{ker}}

\newcommand{\reg}{\operatorname{reg}}

\newcommand{\mc}[1]{\mathcal{#1}}

\def\PP{{\mathbb P}}
\def\lra{\longrightarrow}

\newtheorem{theorem}{Theorem}[section]
\newtheorem*{theorem*}{Theorem}
\newtheorem*{problem*}{Problem}
\newtheorem{lemma}[theorem]{Lemma}
\newtheorem{conjecture}[theorem]{Conjecture}

\newtheorem*{corollary*}{Corollary}

\theoremstyle{definition}

\newtheorem*{definition*}{Definition}
\newtheorem{example}[theorem]{Example}

\theoremstyle{remark}

\newtheorem*{remark*}{Remark}

\numberwithin{equation}{section}

\begin{document}

\title[Higher syzygy bundles and the Eisenbud--Huneke--Ulrich conjecture]{Higher syzygy bundles and \\ the Eisenbud--Huneke--Ulrich conjecture}
\author{Fuxiang Yang}
\address{Department of Mathematics, University of Notre Dame, 255 Hurley, Notre Dame, IN 46556}
\email{fyang6@nd.edu}

\subjclass[2020]{Primary 13D02}

\date{\today}

\keywords{}

\begin{abstract} 
We study ideals in a polynomial ring with partially linear (virtual) resolutions. We establish an effective bound beyond which their powers coincide with a power of the maximal ideal, proving a slightly weaker version of the Eisenbud--Huneke--Ulrich conjecture for a more general class of ideals. We also obtain an upper bound on their Castelnuovo--Mumford regularity, extending Macaulay’s bound and a theorem of Eisenbud--Huneke--Ulrich. We introduce higher syzygy bundles, which generalize the classical Green--Lazarsfeld syzygy bundle. The key ingredient is the relationship between the syzygies of partially linear ideals and the sheaf cohomology of higher syzygy bundles.
\end{abstract}
\maketitle
\vspace{-0.1in}
\section{Introduction}\label{sec:intro}
Let $S = \CC[x_0,x_1,\dots,x_n]$ be the polynomial ring, $\m$ the maximal ideal generated by the variables, and $I$ a homogeneous ideal generated in degree $d$. We say that the resolution of $I$ is \textbf{linear for $p$ steps} if its first linear strand
\begin{equation}\label{eq:firstLinearStrand}
    \cdots\lra S(-d-p-1)^{b_{p+1,d}} \lra S(-d-p)^{b_{p,d}} \lra \cdots \lra S(-d)^{b_{0,d}} \lra I \lra 0
\end{equation}
is exact up to cohomological degree $-p+1$. Equivalently, for every $0 \le i \le p$, we have
\[\beta_{i,j}(I) = \dim_\CC\left( \Tor_i^S(I,\CC)_{i+j}\right) \ne 0 \quad \iff \quad j = d.\]
This notion measures the complexity of the ideal $I$. As $p$ increases, the minimal free resolution of $I$ should become simpler. Eisenbud, Huneke, and Ulrich conjectured the following:
\begin{conjecture}[\cite{EHU}*{Conjecture~1.4}]\label{conj:EHU}
    If $I$ is $\m$-primary and the resolution of $I$ is linear for $p$ steps, where $p \ge 1$, then
    \[I^t = \m^{td} \quad \text{for all }t \ge \frac{n}{p}.\]
\end{conjecture}
\noindent The conjecture is known to hold when $I$ is a monomial ideal \cite{EHU}*{Theorem~8.1}, when $n = 2$ \cite{EHU}*{Corollary~7.7}, and when $p\ge n/2 $ \cite{EHU}*{Corollary~7.7}. However, little is known beyond these cases. In our investigation, it is natural to consider the following more general setting. We say that the resolution of $I$ is \textbf{virtually linear for $p$ steps} if the sheafification of its first linear strand (\ref{eq:firstLinearStrand}) is exact up to cohomological degree $-p+1$, see also \cite{BES}. If the resolution of $I$ is linear for $p$ steps, then it is also virtually linear for $p$ steps because sheafification preserves exactness. Our first result gives a linear effective bound in the direction of Conjecture~\ref{conj:EHU}. 
\begin{theorem}\label{thm:linearBound}
    If $I$ is $\m$-primary and the resolution of $I$ is virtually linear for $p$ steps, where $p \ge 1$, then
    \begin{equation}\label{eq:linearBound}
        I^t = \m^{td}\quad \text{ for all }t \ge  \left\lfloor\frac{n}{p+1}\right\rfloor + \left\lfloor\frac{n-1}{p}\right\rfloor.
    \end{equation}
\end{theorem}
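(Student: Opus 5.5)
The plan is to translate the statement into a vanishing statement for sheaf cohomology on $\PP^n$, and to obtain that vanishing by dimension shifting along the sheafified linear strand (\ref{eq:firstLinearStrand}).

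\textbf{Setup.} Let $V=I_d\subseteq S_d$ be the space of generators. Since $I$ is $\m$-primary, $V$ is base-point free on $\PP^n$. So the evaluation map $V\otimes\mathcal O_{\PP^n}\to\mathcal O(d)$ is surjective, and its kernel is the Green--Lazarsfeld syzygy bundle $M_V$.

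More generally, I will define the higher syzygy bundles $M_0=M_V, M_1, \dots$ as the successive kernels in the sheafification of (\ref{eq:firstLinearStrand}), suitably twisted. Virtual linearity for $p$ steps means exactly that the sheafified strand is exact in this range. This gives short exact sequences
\[0\lra M_{i+1}\lra \mathcal O(-i-1)^{b_{i+1,d}}\lra M_i\lra 0,\qquad 0\le i\le p-1,\]
each with a trivial-type middle term. Since $H^j(\PP^n,\mathcal O(m))=0$ for $0<j<n$, each such sequence gives isomorphisms $H^j(M_i(m))\cong H^{j+1}(M_{i+1}(m))$ in the middle range. At the two ends we need extra care: at $j=0$ the first map can fail to be surjective, and at $j=n-1$ the top cohomology $H^n(\mathcal O(m))$ interferes.

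\textbf{Reduction to vanishing.} The equality $I^t=\m^{td}$ says that $\Sym^t V\to H^0(\mathcal O(td))$ is surjective. I will use the Koszul-type complex obtained from $0\to M_V\to V\otimes\mathcal O\to\mathcal O(d)\to 0$:
\[0\to \wedge^{t}M_V\to \wedge^{t-1}M_V\otimes \Sym^1V\to\cdots\to M_V\otimes\Sym^{t-1}V\to \Sym^tV\otimes\mathcal O\to\mathcal O(td)\to 0.\]
This is exact because it is locally the Koszul complex of a surjection. Chasing $H^0$-surjectivity through this complex, it suffices to show
\[H^{k}\bigl(\wedge^{k}M_V\bigr)=0\quad\text{for }1\le k\le \min(t,n),\]
and more precisely only the twists that actually occur.

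\textbf{Key step: the vanishing.} This is where the exactness of the strand enters. Wedge powers of $M_V$ can be filtered, via Schur functors or a Cauchy-type decomposition of $\wedge^k$ of the defining sequences, by pieces built from the higher syzygy bundles $M_i$. For each piece, I apply dimension shifting along the $p$ exact steps; each step raises the cohomological degree by one. This pushes the group $H^k$ either to top degree $n$, where Serre duality and the twist make it vanish, or to a range where intermediate cohomology of line bundles vanishes.

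I expect the counting to produce the two terms of (\ref{eq:linearBound}) as follows:
\begin{itemize}
\item pieces pushed all the way to $H^n$ advance $p+1$ degrees per block, giving the term $\lfloor n/(p+1)\rfloor$;
\item pieces that must avoid degree $0$ and stop at degree $n-1$ advance $p$ degrees per block, giving the term $\lfloor (n-1)/p\rfloor$.
\end{itemize}
Adding these gives the threshold on $t$.

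\textbf{Main obstacle.} The hard part is controlling $\wedge^k M_V$ in terms of the $M_i$: finding a filtration or complex whose graded pieces are tensor products of the higher syzygy bundles with known vanishing, and then tracking the twists carefully enough to get this exact bound. I would first try the case $p=1$, where only $M_V$ and $M_1$ appear, to calibrate the bookkeeping, and then induct on the number of blocks of length $p$.
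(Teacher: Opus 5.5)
\textbf{There is a genuine gap, and it sits in your reduction step, before the key step.} The Koszul complex you write down has untwisted terms $\wedge^kM_V\otimes\Sym^{t-k}V$. So for $k=1$ your chase needs $H^1(M_V)=0$. But $H^1(M_V)=\coker(V\to S_d)=(S/I)_d$, which is nonzero as soon as $I\neq\m^d$. Dimension shifting along the strand does not help either: it lands on $H^{p+1}(E_p)=(S/I)_d\neq 0$ by (\ref{eq:sheafCoh=S/I}). Hence no filtration of $\wedge^kM_V$ by higher syzygy bundles can give the vanishing you ask for. Note also that your sufficient condition does not depend on $t$ once $t\ge n$, so it could never produce a threshold in $t$.

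What is actually true is that $(S/I^t)_{td}=H^1\bigl(\ker(\Sym^tV\otimes\mc{O}_{\PP^n}\to\mc{O}_{\PP^n}(td))\bigr)$ vanishes even though $H^1$ of the first Koszul term does not. So you must show that a connecting map, or a hypercohomology group, vanishes, not individual cohomology groups of the terms. In the paper this Koszul complex is used only after twisting by $\mc{O}_{\PP^n}(j)$ with $j\ge\reg(I)-d$, where $H^1(M_V(j))=0$. There it yields only $\reg(I^k)\le\reg(I)+(k-1)d$ (Lemma~\ref{lem:regofPower}). Separately, your key step is not supplied: no filtration is constructed, and the two floor terms are asserted rather than derived.

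\textbf{The missing idea.} You need complexes in which the linear syzygies $W_i(-i)$ themselves appear as terms, namely symmetric and exterior powers of the sheafified strand $\mc{A}^\bullet$. The bound then splits into two separate ingredients, which your single count conflates.
\begin{enumerate}
\item \emph{Steps of length $p$.} For $j\ge 0$, the augmented $\mathbb{H}^1$ of $\mc{S}^\bullet_r=\Sym^r(\mc{A}^\bullet)$ twisted by $j$ computes $(S/I^r)_{rd+j}$. Use the exact sequence $0\to\Sym^r(\mc{B}^\bullet)\to\mc{S}^\bullet_r\to\mc{S}^\bullet_{r-1}(d)\to 0$, where $\Sym^r(\mc{B}^\bullet)$ has a single cohomology sheaf $\mc{P}^r_p(E_p)$ in degree $-rp$. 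Together with Grothendieck vanishing of $H^{(t+1)p+1}$ and $H^{(t+1)p+2}$ once $(t+1)p\ge n$, this gives $(S/I^t)_{td}=(S/I^k)_{td}$ with $k=\lfloor (n-1)/p\rfloor$ (Lemma~\ref{lem:H1Sk}).
\item \emph{Steps of length $p+1$.} The exterior powers of $\mc{A}^\bullet$ give injections $H^{(l-1)(p+1)}(\mc{Q}^{l-1}_p(E_p)(j+d))\hookrightarrow H^{l(p+1)}(\mc{Q}^l_p(E_p)(j))$. From these, $\reg(S/I)\le(\lfloor n/(p+1)\rfloor+1)(d-1)$.
\end{enumerate}
Combining the regularity bound with Lemma~\ref{lem:regofPower} gives $\reg(I^k)\le(\lfloor n/(p+1)\rfloor+k)d\le td$. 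Hence $(S/I^k)_{td}=0$, and therefore $(S/I^t)_{td}=0$.
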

\noindent In particular, Theorem~\ref{thm:linearBound} implies that Conjecture~\ref{conj:EHU} holds whenever $p\ge (n-1)/2$ or $n \le 3$, thereby recovering and extending the previously known cases. A key ingredient in proving Theorem~\ref{thm:linearBound} is an upper bound on the Castelnuovo--Mumford regularity of such ideals. Many partial results on regularity are known for specific classes of ideals with partially linear resolutions. For example, such bounds are known when $S/I$ is Koszul \cite[Theorem~7.1]{ACI-subadd}, when $I$ is squarefree monomial \cite[Theorem~1.1]{DaoVu}, and when $I$ is $\m$-primary monomial \cite[Theorem~3.1]{DaoEisenbud}. In this paper, we focus on the case in which $I$ is $\m$-primary. Unless otherwise stated, we assume throughout that $I$ is $\m$-primary and generated in degree $d$.

With this setup in place, assume now that the resolution of $I$ is linear for $p$ steps. In the case $p = 0$, Macaulay proved \cite{Macaulay}*{\S 86} that
\[\reg(S/I) \le (n+1)(d-1).\]
For modern treatments of this result, see \cite[Example~2.4]{EHL} and \cite[Remark~3.1]{HMNU}. At the other extreme, when $p = n$, $I$ has a linear resolution, and hence
\[\reg(S/I) = d-1.\]
More generally, when $p\ge (n-1)/2$, that is, when at least half of the minimal free resolution is linear, Eisenbud, Huneke, and Ulrich proved in \cite[Corollary~3.2]{EHU} that
\[\reg(S/I) \le 2(d-1).\]
Our second result applies to all values of $p$ and extends the theory to virtually linear ideals.
\begin{theorem}\label{thm:regularityBound}
    If the resolution of $I$ is virtually linear for $p $ steps, then
    \[\reg(S/I) \le \left\lceil \frac{n+1}{p+1}\right\rceil (d - 1).\]
    Moreover, this bound is sharp for $p = 0$, $p = 1$, and $p \ge (n-1)/2$.
\end{theorem}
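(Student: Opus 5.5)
The plan is to work on $\PP^n$ through the sheafified first linear strand. Let $V$ be the degree-$d$ generators of $I$ and $M_0 = \ker(V\otimes\mathcal O \to \mathcal O(d))$. The map $V\otimes\mathcal O\to\mathcal O(d)$ is surjective because $I$ is $\m$-primary, so its generators have no common zero. Recursively let $M_{i+1}$ be the kernel of $\mc O(-i-1)^{b_{i+1,d}}\twoheadrightarrow M_i(\ldots)$ coming from the strand. These are the higher syzygy bundles. Virtual linearity for $p$ steps gives short exact sequences of vector bundles
\[
0\to M_{i}\to \mathcal O(-i)^{b_{i,d}}\to M_{i-1}\to 0,\qquad 1\le i\le p,
\]
with $M_p$ a vector bundle, after twisting by $d$. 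Since $I$ is $\m$-primary, $\reg(S/I)$ is the top degree $r$ with $(S/I)_r\ne 0$. So it suffices to show $H^0(\mc O(t))\to$ is hit by $I_t$, that is, $I_t=S_t$, for $t\ge \lceil (n+1)/(p+1)\rceil (d-1)+1$. This is equivalent to the vanishing of $H^1(M_0(t-d))$ in the appropriate twist, because $H^1(\mc O(k))=0$ and $S_{t}=H^0(\mc O(t))$.

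Dimension shifting along the sequences turns $H^1(M_0(k))$ into $H^{p+1}(M_p(k+p))$, since the middle terms are sums of line bundles, whose intermediate cohomology vanishes for $1\le j\le n-1$. This works as long as $p+1\le n$. If $p=n$ the resolution is linear and $\reg=d-1$ directly. So the key step is a vanishing $H^{p+1}(M_p(m))=0$ for $m$ large in terms of $d$ and $p$. I would get it via Serre duality, or better by exploiting the Koszul-type structure. The $M_i$ sit inside exterior-power-like constructions, and I expect an estimate of the shape: $M_0$ is a quotient-kernel with $H^1(M_0(k))=0$ once $k\ge n(d-1)-d+1$ (Macaulay, $p=0$). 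Each additional linear step buys $p+1$ cohomological degrees at the cost of $d-1$ in twist.

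Concretely, I would iterate. Consider the Koszul complex of $\mathcal O(d)$ built from $V$, i.e.\ the exact complexes $\wedge^{j}M_0\otimes\ldots$. Wedge or tensor powers of the truncated linear resolution give a resolution of $\Sym$- or $\wedge$-constructions of $M_0$ whose first $p$ terms are line bundles of degree $-d-i$. Each block of $p+1$ cohomological steps can then be climbed using $H^{j}(\mc O(k))=0$, with one twist shift of $d-1$ needed to pass from $H^{p+1}$ of one block to $H^1$ of the next. This is done via the Koszul complex $0\to\wedge^{\bullet}M_0\to\wedge^\bullet V\otimes\mc O\to\ldots$, twisted by $\mc O(d)$ multiples. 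After $\lceil (n+1)/(p+1)\rceil$ blocks we reach cohomological degree beyond $n$, where everything vanishes. The total twist cost is $\lceil (n+1)/(p+1)\rceil(d-1)$, which matches the bound.

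The main obstacle is making this block iteration precise. One needs the right multiplicative structure on the higher syzygy bundles, for instance maps $M_p\otimes M_0\to$ something or a spectral sequence from a double complex of the strand tensored with the Koszul complex on $V$. Only then does "linear for $p$ steps" really transfer $H^{p+1}$ vanishing to $H^1$ vanishing with the loss of only $d-1$ in degree. I would first try the tensor product of the truncated strand with itself $\lceil (n+1)/(p+1)\rceil$ times and chase a hypercohomology spectral sequence.

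For sharpness:
\begin{itemize}
\item For $p=0$, take $I=(x_0^d,\dots,x_n^d)$.
\item For $p\ge(n-1)/2$, the bound is $2(d-1)$. Use the EHU examples, or $I=\m^{d}$ modified, e.g.\ a generic ideal attaining $2(d-1)$.
\item For $p=1$, take an appropriate ideal such as powers of a linear-presentation complete intersection. I would check these via known Betti tables.
\end{itemize}
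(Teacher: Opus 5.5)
\textbf{There is a genuine gap: the reduction is right, but the proof of the key vanishing is missing.} Your reduction matches the paper's starting point. Dimension-shifting along $0\to E_i\to W_i(-i)\to E_{i-1}\to 0$ gives $(S/I)_{j+d}\cong H^{p+1}(E_p(j))$ for $j\ge 0$. So with $t=\lceil (n+1)/(p+1)\rceil-1$ the theorem is equivalent to $H^{p+1}(E_p(j))=0$ for $j\ge t(d-1)$. Everything after that is heuristic, and the ``main obstacle'' you name is the entire proof.

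\textbf{The missing idea.} Use \emph{exterior} powers $\mc{W}^\bullet_l=\bigwedge^l\mc{A}^\bullet$ of the sheafified strand ending at $V\otimes\mc{O}_{\PP^n}$, whose cohomology sheaves are $\mc{H}^0=\mc{O}_{\PP^n}(d)$ and $\mc{H}^{-p}=E_p$. Since $\bigwedge^k\mc{O}_{\PP^n}(d)=0$ for $k\ge 2$, $\mc{W}^\bullet_l$ has exactly two cohomology sheaves: $\mc{Q}^l_p(E_p)$ in degree $-lp$ and $\mc{Q}^{l-1}_p(E_p)(d)$ in degree $-(l-1)p$. In the second hypercohomology spectral sequence the only possible differential is then $d_{p+1}$. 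Hence $\mathbb{H}^{l-1}(\mc{W}^\bullet_l(j))=0$ forces an injection $H^{(l-1)(p+1)}(\mc{Q}^{l-1}_p(E_p)(j+d))\hookrightarrow H^{l(p+1)}(\mc{Q}^l_p(E_p)(j))$. That vanishing comes for free from the first spectral sequence. The terms are sums of line bundles, so hypercohomology lives in degrees $[-lp,0]\cup[n-lp,n]$, and $0<l-1<n-lp$ exactly when $2\le l<(n+1)/(p+1)$.

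\textbf{Why your sketched routes do not work.} Your tensor-power suggestion produces cohomology sheaves in every degree $0,-p,\dots,-rp$, with binomial multiplicities, and gives no such clean injection. The mechanism you describe, passing from $H^{p+1}$ of one block back to $H^1$ of the next via the Koszul complex of $M_0$, does not exist. That complex only relates the $\bigwedge^i M_0$ to one another and never involves $E_p$.

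\textbf{Where the bound actually comes from.} The real ladder climbs in the power of $E_p$, and each rung costs a twist of $d$, not $d-1$: one gets $H^{p+1}(E_p(j))\hookrightarrow H^{t(p+1)}(\mc{Q}^t_p(E_p)(j-(t-1)d))$. The saving of $t$ that produces $(t+1)(d-1)$ happens only at the last rung. There, $\mathbb{H}^t(\mc{W}^\bullet_{t+1}(j'-d))$ is a subquotient of $H^n$ of a sum of copies of $\mc{O}_{\PP^n}(t-n+j'-d)$, so it vanishes for $j'\ge d-t$. The target $H^{(t+1)(p+1)}$ vanishes by Grothendieck since $(t+1)(p+1)>n$. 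None of this bookkeeping is in your proposal, so the bound is asserted, not derived.

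\textbf{Sharpness is only established for $p=0$.} There $(x_0^d,\dots,x_n^d)$ is correct. For $p=1$ your suggestion fails. An $\m$-primary complete intersection with linear presentation is generated by linear forms, so its powers are powers of $\m$, with $\reg(S/I)=d-1$. Powers of a complete intersection of forms of degree $e\ge 2$ are not linearly presented at all. The paper instead uses $I_{d,n}$ from Example~\ref{ex:Idn}, which is virtually linear for one step and has $\reg(S/I_{d,n})=\lceil (n+1)/2\rceil(d-1)$. For $(n-1)/2\le p\le n-1$ you need an explicit ideal, linear for $p$ steps, with $\reg(S/I)=2d-2$; ``a generic ideal attaining $2(d-1)$'' is not an argument. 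The paper takes an Artinian reduction of the ideal of the $(d-2)$-nd secant variety of a genus $n-p$ curve embedded in high degree (Example~\ref{ex:sharp}), relying on Ein--Niu--Park.
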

\noindent In the case $p \ge (n-1)/2$, the optimal example comes from secant varieties of high degree curves studied in \cite{ENP}, see Example~\ref{ex:sharp} for the optimal examples. The bound we obtain is not unexpected. Indeed, by a direct computation, if the resolution of $I$ is linear for $p$ steps and $I$ satisfies the subadditivity condition, see \cite{M-Subadd} for the definition, then the same regularity bound is obtained. We therefore conjecture that the bound in Theorem~\ref{thm:regularityBound} is sharp in general. 

Assume the resolution of $I$ is virtually linear for $p$ steps. Sheafifying the first linear strand (\ref{eq:firstLinearStrand}), we obtain a complex
\begin{equation}\label{eq:linearStrand}
    \mc{A}^\bullet \colon W_p(-p) \overset{d_p}{\lra} W_{p-1}(-p+1) \overset{d_{p-1}}{\lra} \cdots \lra W_1(-1) \overset{d_1}{\lra} V \otimes \mc{O}_{\PP^n}\left( \overset{d_0}{\lra }\widetilde{I}(d) \right)\lra 0.
\end{equation}
We regard $\mc{A}^\bullet$ as a complex with $\mc{A}^0 = V \otimes \mc{O}_{\PP^n}$ and $\mc{A}^{-i} = W_i (-i)$ for $1 \le i \le p$. Note that $\mc{A}^\bullet$ is exact in cohomological degrees $-p+1,\dots,-1$. Since $I$ is $\m$-primary, we have $\widetilde{I}(d) = \mc{O}_{\PP^n}(d)$. The map $d_0$ in (\ref{eq:linearStrand}) determines a linear series  $(V,\mc{O}_{\PP^n}(d))$. We define the \textbf{$i$-th syzygy bundle} by
\[E_i \coloneq \ker(d_i).\]
When $i = 0$, $E_0 = \ker(d_0) = M_V$ is the classical syzygy bundle of the linear series $(V,\mc{O}_{\PP^n}(d))$, see \cite{L}*{Section~1.3}. Throughout the paper, we exploit the connection between the homological properties of $I$ and the sheaf cohomological properties of $E_i$.
\begin{example}\label{ex:Idn}
    One important class of ideals with partially linear virtual resolution arises naturally in the study of binary forms, see \cite{RSWY}. Let $S = \Sym(\Sym^n (\CC^2))$, where $\Sym^n (\CC^2)$ is identified with the space of binary forms of degree $n$, and let $I_{d,n}$ be the ideal generated by $\Sym^{nd}(\CC^2) \subseteq \Sym^d(\Sym^n (\CC^2)) = S_d$. The resolution of $I_{d,n}$ is virtually linear for one step, see \cite{BFL}, but not linearly presented in general. Theorem~\ref{thm:linearBound} and Theorem~\ref{thm:regularityBound} imply that
    \[I_{d,n}^t = \m^{td} \quad \text{for all }t \ge \left\lfloor\frac{3n}{2}\right\rfloor-1 \quad \text{and} \quad \reg(S/I_{d,n}) \le \left\lceil \frac{n+1}{2}\right\rceil(d-1).\]
    In fact, for this specific class, \cite{RSWY} proves the stronger results
    \[I_{d,n}^t = \m^{td} \quad \text{for all }t \ge n \quad \text{and} \quad \reg(S/I_{d,n}) = \left\lceil \frac{n+1}{2}\right\rceil(d-1).\]
\end{example}

\subsection*{Acknowledgment}
I would like to thank Claudiu Raicu for his guidance and support throughout this project. Computations with Macaulay2 \cite{GS} provided many valuable insights. The author acknowledges support from the Arthur J. Schmitt Fellowship and the National Science Foundation Grant DMS-2302341.

\section{Preliminaries}\label{sec:prelim}
\subsection{Notation and Conventions}
Throughout the paper, $S = \CC[x_0,\dots,x_n]$ is the homogeneous coordinate ring of $\PP^n$, $\m$ is the maximal ideal generated by the variables, and $I$ is an $\m$-primary ideal generated in degree $d$. For a coherent sheaf $\mc{F}$ on $\PP^n$, we denote its $i$-th sheaf cohomology by $H^i(\mc{F}) = H^i(\PP^n,\mc{F})$. Given a complex $\mc{C}^\bullet$ of coherent sheaves on $\PP^n$, the $i$-th cohomology sheaf of $\mc{C}^\bullet$ is denoted by $\mc{H}^i(\mc{C}^\bullet)$ and the $i$-th hypercohomology of $\mc{C}^\bullet$ is denoted by $\mathbb{H}^i(\mc{C}^{\bullet})$.

\subsection{Complexes and Hypercohomology} \label{sec:prelim-hyper}
Let $\mc{C}^\bullet \colon \cdots \lra \mc{C}^{-1} \lra \mc{C}^0 \lra 0$ be a complex of coherent sheaves on $\PP^n$. There are two spectral sequences $\prescript{I}{}E,\prescript{II}{}E$ with differentials
\[\prescript{I}{}d_i \colon \prescript{I}{}E_i^{p,q} \lra \prescript{I}{}E_i^{p+i,q-i+1},\quad \prescript{II}{}d_i \colon \prescript{II}{}E_i^{p,q} \lra \prescript{II}{}E_i^{p-i+1,q+i}\]
both converging to the hypercohomology of $\mc{C}^\bullet$, with $E_2$-pages given by
\[\prescript{I}{}E^{p,q}_2 = \mc{H}^p(H^q(\mc{C}^\bullet)), \quad \prescript{II}{}E^{p,q}_2 = H^q(\mc{H}^p(\mc{C}^\bullet)) \quad \implies \mathbb{H}^{p+q}(\mc{C}^\bullet).\]
For details on spectral sequences, we refer the reader to \cite{Weibel}*{Section~5.7}. We define the \textbf{augmented complex} $\widehat{\mc{C}}^{\bullet}$ by setting $\widehat{\mc{C}}^{\bullet}[1]$ to be the mapping cone of the natural map $\mc{C}^\bullet \lra \mc{H}^0(\mc{C}^\bullet)$, where $\mc{H}^0(\mc{C}^\bullet)$ is regarded as a complex concentrated in cohomological degree $0$. Concretely,
\[\widehat{\mc{C}}^{\bullet} \colon \cdots \lra \mc{C}^{-1} \lra \mc{C}^{0} \overset{\theta}{\lra } \mc{H}^0(\mc{C}^{\bullet}) \lra 0.\]
Note that the augmented complex $\widehat{\mc{C}}^\bullet$ is exact in cohomological degrees $0$ and $1$. We denote by $H^0(\theta) \colon H^0(\mc{C}^0) \lra H^0(\mc{H}^0(\mc{C}^\bullet))$ the map on global sections induced by $\theta$.
\begin{lemma}\label{lem:hypercohomology}
    If $H^i\left(\widehat{\mc{C}}^{-i}\right) = 0$ for all $i \ge 1$ and $H^{i+1}\left(\widehat{\mc{C}}^{-i}\right)  = 0$ for all $i \ge 0$, then
    \[\coker(H^0(\theta)) = \mathbb{H}^1\left( \widehat{\mc{C}}^\bullet\right).\]
    In particular, these hypotheses hold when $\mc{C}^{i} = V_i\otimes \mc{O}_{\PP^n}(i + j)$ for some vector spaces $V_i$ and a non-negative integer $j$.
\end{lemma}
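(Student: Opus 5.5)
We compare the two hypercohomology spectral sequences for the augmented complex $\widehat{\mc{C}}^\bullet$, which lives in cohomological degrees $\le 1$: $\widehat{\mc{C}}^{-i}=\mc{C}^{-i}$ for $i\ge 0$ and $\widehat{\mc{C}}^{1}=\mc{H}^0(\mc{C}^\bullet)$. We would first use the first spectral sequence, $\prescript{I}{}E_1^{p,q}=H^q(\widehat{\mc{C}}^p)$, to compute $\mathbb{H}^1(\widehat{\mc{C}}^\bullet)$. The total degree $1$ terms are $H^q(\widehat{\mc{C}}^{1-q})$ for $q\ge 0$. The term with $q=0$ is $H^0(\mc{H}^0(\mc{C}^\bullet))$. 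For $q\ge1$ the term is $H^{q}(\widehat{\mc{C}}^{-(q-1)})=H^{i+1}(\widehat{\mc{C}}^{-i})$ with $i=q-1\ge0$, and these vanish by hypothesis. Hence $\mathbb{H}^1$ is the $E_\infty^{1,0}$ term.

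Next we identify $E_\infty^{1,0}$. On the $q=0$ row, $E_2^{1,0}=\coker(H^0(\theta))$, since $\widehat{\mc{C}}^2=0$. Differentials leaving $E_r^{1,0}$ go to $p\ge 1+r>1$, which is zero. Differentials entering come from $E_r^{1-r,r-1}$ with $r\ge2$. These are subquotients of $H^{r-1}(\widehat{\mc{C}}^{1-r})=H^i(\widehat{\mc{C}}^{-i})$ with $i=r-1\ge1$, which vanish by the first hypothesis. So $E_\infty^{1,0}=\coker(H^0(\theta))$, giving $\coker(H^0(\theta))=\mathbb{H}^1(\widehat{\mc{C}}^\bullet)$.

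The only point needing care is the bookkeeping of indices and the convention that the row $p$ is the complex degree. The second spectral sequence is not needed here, though it will be used later, since $\widehat{\mc{C}}$ is exact in degrees $0,1$. We should double-check that the $E_1$ differential along the row $q=0$ is $H^0$ of the complex differentials, so that $E_2^{1,0}$ is exactly the cokernel of $H^0(\theta)$, including the possibility that $H^0$ of the degree-$0$ piece is involved only via $\theta$.

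For the "in particular" statement, take $\mc{C}^{-i}=V_{-i}\otimes\mc{O}_{\PP^n}(j-i)$ with $j\ge0$ and $i\ge 0$. Then $H^i(\mc{C}^{-i})$ and $H^{i+1}(\mc{C}^{-i})$ are intermediate or top cohomology of line bundles. Intermediate cohomology ($1\le q\le n-1$) vanishes always. The top cohomology $H^n(\mc{O}(j-i))$ is nonzero only if $j-i\le -n-1$. In the case $H^i$ with $i=n$, we have $j-n\ge -n>-n-1$, so it vanishes. In the case $H^{i+1}$ with $i+1=n$, we have $j-(n-1)\ge -n+1$, so it vanishes. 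Cohomology beyond degree $n$ is zero. The remaining hypotheses concern $i\ge1$ for $H^i$ and $i\ge0$ for $H^{i+1}$, and these are all positive-degree cohomology, so none involve $H^0$. Hence all hypotheses hold.
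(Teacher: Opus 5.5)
Your proof is correct and is essentially the paper's argument. You use the first hypercohomology spectral sequence, kill the other total-degree-$1$ terms with $H^{i+1}(\widehat{\mc{C}}^{-i})=0$, and kill the differentials into position $(1,0)$ with $H^{i}(\widehat{\mc{C}}^{-i})=0$, giving $\coker(H^0(\theta))=E_2^{1,0}=E_\infty^{1,0}=\mathbb{H}^1(\widehat{\mc{C}}^\bullet)$; your check of the special case via the cohomology of line bundles on $\PP^n$ matches the paper's, only written out in more detail.
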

\begin{proof}
    Consider the spectral sequence $\prescript{I}{}E$. It follows from the assumption $H^{i+1}\left(\widehat{\mc{C}}^{-i}\right) = 0$ for all $i \ge 0$ that $\prescript{I}{}E_2^{-i,i+1} = 0$ for all $i \ge 0$. In particular, we have $\mathbb{H}^1\left( \widehat{\mc{C}}^\bullet\right) = \prescript{I}{}E_{\infty}^{1,0}$. Since $H^{i}\left(\widehat{\mc{C}}^{-i}\right)  = 0$ for all $i \ge 1$, $\prescript{I}{}d_{i+1} \colon \prescript{I}{}E_{i+1}^{-i,i} \lra \prescript{I}{}E_{i+1}^{1,0}$ is the zero map for all $i \ge 1$. Hence,
    \[\coker(H^0(\theta)) =\prescript{I}{}E_2^{1,0} = \prescript{I}{}E_\infty^{1,0} =  \mathbb{H}^1\left( \widehat{\mc{C}}^\bullet\right).\]
    For the second part, the terms of ${\mc{C}}^\bullet$ are direct sums of line bundles, so their sheaf cohomology can be computed using \cite{Hartshorne}*{Theorem~III.5.1}. In particular, we have
    \begin{align*}
        H^i\left(\widehat{\mc{C}}^{-i}\right) = H^i(V_{-i} \otimes \mc{O}_{\PP^n}(-i+j)) = 0\quad &\text{for all } i \ge 1\\
        H^{i+1}\left(\widehat{\mc{C}}^{-i}\right) = H^{i+1}(V_{-i} \otimes \mc{O}_{\PP^n}(-i+j)) = 0\quad &\text{for all } i \ge 0.
    \end{align*}
    This proves the lemma.
\end{proof}

\subsection{Castelnuovo--Mumford regularity}
Let $M$ be a finitely generated graded $S$-module. The \textbf{Castelnuovo--Mumford regularity}, or simply \textbf{regularity}, of $M$ is defined by 
\[\reg(M) = \max\{j \,\colon \Tor_{i}^S(M,\CC)_{i+j} \ne 0 \,\,\text{ for some }i\}.\]
When $M$ has finite length, there is a simpler description. By \cite{DE-syzygies}*{Corollary~4.4},
\begin{equation}\label{eq:regFL}
    \reg(M) = \max \{j \,\colon M_j \ne 0\}.
\end{equation}
\noindent We now prove a regularity bound for powers of $I$ using the classical syzygy bundle.
\begin{lemma}\label{lem:regofPower}The regularity of the $t$-th power of $I$ satisfies
    \[\reg(I^t) \le \reg(I) + (t-1)d.\]
\end{lemma}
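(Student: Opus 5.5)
We prove the bound by induction on $t$; the case $t=1$ is trivial. Everything reduces to a statement about finite-length quotients. Since $I$ is $\m$-primary, so is $I^t$, and $S/I^t$ has finite length. By (\ref{eq:regFL}),
\[
\reg(I^t)=\reg(S/I^t)+1=\max\{j : (S/I^t)_j\ne 0\}+1 .
\]
Set $r=\reg(I)$. Then $I_j=S_j$ for all $j\ge r$. So it suffices to show that $I^t_j=S_j$ for all $j\ge r+(t-1)d$.

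We use the classical syzygy bundle as the text suggests. Let $V=I_d$. Then $0\to M_V\to V\otimes\mc{O}_{\PP^n}\to\mc{O}_{\PP^n}(d)\to 0$ is exact, because $\widetilde{I}(d)=\mc{O}_{\PP^n}(d)$ and $V$ globally generates. Twist by $\mc{O}_{\PP^n}(k)$ and take global sections. The multiplication map $V\otimes S_k\to S_{k+d}$ is surjective as soon as $H^1(M_V(k))=0$. So the key step is to show that $H^1(M_V(k))=0$ for $k\ge r-d$, i.e.\ $k+d\ge r$.

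To prove this vanishing, compare with the full ideal. Since $I$ is generated in degree $d$, the map $V\otimes S_k\to I_{k+d}$ is surjective for every $k\ge 0$. Also $I_{k+d}=S_{k+d}$ once $k+d\ge r$. The long exact sequence gives
\[
V\otimes S_k\to S_{k+d}\to H^1(M_V(k))\to V\otimes H^1(\mc{O}_{\PP^n}(k))=0 ,
\]
so $H^1(M_V(k))=\coker(V\otimes S_k\to S_{k+d})=S_{k+d}/I_{k+d}=0$ when $k+d\ge r$. (One can equivalently phrase this through Lemma~\ref{lem:hypercohomology} applied to the two-term complex $V\otimes\mc{O}_{\PP^n}\to \mc{O}_{\PP^n}(d)$.) This shows $I_{j}\cdot$ generation: $\m^{j}\subseteq I$ in the sense that $S_j=V\cdot S_{j-d}$ for $j\ge r$.

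For the induction step, take $j\ge r+(t-1)d$ with $t\ge2$. Then $j-d\ge r+(t-2)d$, so the induction hypothesis gives $S_{j-d}=I^{t-1}_{j-d}$. Hence
\[
I^t_j\supseteq V\cdot I^{t-1}_{j-d}=V\cdot S_{j-d}=S_j ,
\]
where the last equality is the surjectivity established above, valid since $j\ge r$. Therefore $(S/I^t)_j=0$ for $j\ge r+(t-1)d$, and $\reg(I^t)\le r+(t-1)d$.

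The only delicate point is the translation between regularity and top nonzero degree. It uses $\reg(I^t)=\reg(S/I^t)+1$, valid since $I^t\ne S$, together with (\ref{eq:regFL}) for the finite-length module $S/I^t$. It also uses the fact that $I$ is generated in degree $d$, so that $I^t$ is generated by $V^t$; this is what makes $V\cdot I^{t-1}\subseteq I^t$ the right inclusion. I expect no genuine obstacle beyond this bookkeeping.
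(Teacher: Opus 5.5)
Your argument is correct, and it takes a different, more elementary route than the paper.

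The paper does not induct on $t$. It resolves $\mc{O}_{\PP^n}(td)$ by the Koszul complex with terms $\Sym^{t-i}V\otimes\wedge^i M_V$. It then proves $H^i(\wedge^i M_V(j))=H^{i+1}(\wedge^i M_V(j))=0$ for $j\ge\reg(I)-d$ by induction on $i$, using the sequences $0\to\wedge^iM_V\to\wedge^iV\otimes\mc{O}_{\PP^n}\to\wedge^{i-1}M_V(d)\to 0$. Finally it invokes Lemma~\ref{lem:hypercohomology} to obtain surjectivity of $\Sym^tV\otimes S_{j-td}\to S_j$ for $j\ge\reg(I)+(t-1)d$ in a single step.

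You instead induct on $t$. You use only the degreewise identity $S_j=I_j=V\cdot S_{j-d}$ for $j\ge r$, which gives $S_j=V\cdot I^{t-1}_{j-d}\subseteq I^t_j$. Your indices check out: $r\ge d$, and $j-d\ge r+(t-2)d$ is exactly what the inductive hypothesis needs.

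Your syzygy-bundle paragraph is superfluous. You compute $H^1(M_V(k))$ as precisely the cokernel $S_{k+d}/V\cdot S_k$ whose vanishing you want. So your ``key step'' is nothing more than $I_{k+d}=S_{k+d}$ together with generation of $I$ in degree $d$, and the sheaf cohomology can be deleted entirely.

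What each approach buys: yours is shorter and purely algebraic. The paper's version rehearses, in the simplest case, the Koszul-complex-plus-hypercohomology technique that it later applies to $\mc{S}^\bullet_t$ and $\mc{W}^\bullet_l$. Those are the symmetric and exterior powers of the linear strand, where the linear syzygies, and not just the generators, enter.
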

\begin{proof}
    Consider the short exact sequence
    \[0 \lra M_V \lra V \otimes \mc{O}_{\PP^n} \lra \mc{O}_{\PP^n}(d) \lra 0.\]
    It follows from the long exact sequence in sheaf cohomology that
    \[H^1(M_V(j)) = \coker(V\otimes S_j \lra S_{j+d})=(S/I)_{j+d} \quad \text{for all }j \ge -d.\]
    By (\ref{eq:regFL}), we have $\reg(I) = \reg(S/I)+1 = \max \{j \,\colon (S/I)_j \ne 0\} + 1$ which implies that
    \begin{equation}\label{eq:vanishingofMv}
        H^1(M_V(j)) = 0 \quad \text{ for all }j \ge \reg(I)-d.
    \end{equation}
    For every $t$, we have the following exact sequence
    \[\mc{C}^\bullet \colon 0 \lra \bigwedge^t M_V \lra \cdots \lra \Sym^{t-1}V \otimes M_V \lra \Sym^t V \otimes \mc{O}_{\PP^n} \overset{\theta}{\lra} \mc{O}_{\PP^n}(td) \lra 0.\]
    Since $\coker(H^0(\theta(j-td)))=(S/I^t)_j$, to show $\reg(I^t) \le \reg(I) + (t-1)d$, by (\ref{eq:regFL}), it suffices to show that
    \[\coker(H^0(\theta(j-td))) = 0 \quad \text{for all }j \ge \reg(I) + (t-1)d.\]
    To apply Lemma~\ref{lem:hypercohomology}, it remains to show that
    \begin{equation}\label{eq:vanishingofMv1}
        H^i(\wedge^i M_V(j)) = 0 \quad \text{ for all }j \ge \reg(I)-d, i\ge 1, \text{ and}
    \end{equation}
    \begin{equation}\label{eq:vanishingofMv2}
        H^{i+1}(\wedge^i M_V(j)) = 0 \quad \text{ for all }j \ge \reg(I)-d, i\ge 0.
    \end{equation}
    We prove (\ref{eq:vanishingofMv1}) and (\ref{eq:vanishingofMv2}) by induction. The base case of (\ref{eq:vanishingofMv1}) follows from (\ref{eq:vanishingofMv}), and the base case of (\ref{eq:vanishingofMv2}) follows from the fact that $H^1(\mc{O}_{\PP^n}(j)) = 0$ for all $j \ge \reg(I)-d \ge 0$. We now consider the long exact sequence in sheaf cohomology induced by
    \[0 \lra \bigwedge^i M_V \lra \bigwedge^i V \otimes \mc{O}_{\PP^n} \lra \bigwedge^{i-1}M_V (d) \lra 0.\]
    Since $H^i(\mc{O}_{\PP^n}(j)) = 0$ for all $i \ge 1$ and $j \ge 0$, we have 
    \begin{align*}
        &H^i(\wedge^i M_V(j)) = H^{i-1}(\wedge^{i-1} M_V(j+d)) \quad \text{ for all }i \ge 2 \text{ and } j \ge \reg(I)-d \ge 0, \text{ and}\\
        &H^{i+1}(\wedge^i M_V(j)) = H^{i}(\wedge^{i-1} M_V(j+d)) \quad \text{ for all }i \ge 1 \text{ and } j \ge \reg(I)-d \ge 0.
    \end{align*}
    Hence, this proves (\ref{eq:vanishingofMv1}) and (\ref{eq:vanishingofMv2}) by induction. To conclude the proof, we apply Lemma~\ref{lem:hypercohomology} and get $\coker(H^0(\theta(j-td))) = \mathbb{H}^1(\mc{C}^\bullet) = 0$ for all $j \ge \reg(I) + (t-1)d$.
\end{proof}
\subsection{Symmetric and exterior products of complexes} In this section, we recall some useful facts about the symmetric and exterior products of complexes in our specific setting. For a more general discussion of Schur complexes, see \cite{weyman}*{Section~2.4}. Let $\mc{Q}_i^r(-)$ be $\Sym^r(-)$ when $i$ is odd and $\bigwedge^r(-)$ when $i$ is even, and let $\mc{P}_i^r(-)$ be $\Sym^r(-)$ when $i$ is even and $\bigwedge^r(-)$ when $i$ is odd. Let
\[\mc{C}^\bullet \colon 0 \lra \mc{C}^{-m} \lra \cdots \lra \mc{C}^{-1} \lra \mc{C}^0 \lra 0\]
be a complex of vector bundles on $\PP^n$ where
\[\mc{H}^i(\mc{C}^\bullet) = \begin{cases}
    \mc{E} & i = -m,\\
    \mc{F} & i = 0,\\
    0 & \text{otherwise,}
\end{cases}\]
for some vector bundles $\mc{E},\mc{F}$ on $\PP^n$. We first describe the terms of symmetric and exterior products of $\mc{C}^\bullet$. We have
\begin{equation}\label{eq:termsSym}
    \left(\Sym^r(\mc{C}^\bullet)\right)^{i} = \bigoplus_{\substack{a_0,\dots,a_m \ge 0\\ a_0 + \cdots + a_m = r\\ a_1 + 2a_2 + \cdots + ma_m = -i}} \bigotimes_{j = 0}^m \mc{P}^{a_j}_{j} (\mc{C}^{-j}).
\end{equation}
\begin{equation}\label{eq:termsWedge}
    \left(\bigwedge^r(\mc{C}^\bullet)\right)^{i} = \bigoplus_{\substack{a_0,\dots,a_m \ge 0\\ a_0 + \cdots + a_m = r\\ a_1 + 2a_2 + \cdots + ma_m = -i}} \bigotimes_{j = 0}^m \mc{Q}^{a_j}_{j} (\mc{C}^{-j}).
\end{equation}

We now describe their cohomology sheaves. Since $\mc{E}$ and $\mc{F}$ are vector bundles, \cite{Weibel}*{Theorem~3.6.3} gives
\[\mc{H}^i((\mc{C}^\bullet)^{\otimes r}) = \begin{cases}
    \bigoplus_{I \subseteq \{1,\dots,r\}, |I| = i'}\mc{E}^{\otimes i'} \otimes \mc{F}^{\otimes (r-i')}& i = -mi',\\
    0& \text{otherwise}.
\end{cases}\]
Consider the $\mathbb{Z}/2$-grading on $\mc{C}^\bullet$ given by
\[F_0 = \bigoplus_{i\text{ even}} \mc{C}^{-i} \quad \text{and}\quad F_1 = \bigoplus_{i\text{ odd}} \mc{C}^{-i}.\]
The usual $\mathbb{Z}/2$-graded-commutative $\mathfrak{S}_r$-action on the tensor product $(\mc{C}^\bullet)^{\otimes r}$ commutes with the differentials, see \cite{weyman}*{page~74}. Since we work over $\mathbb C$, the invariant
functor \((-)^{\mathfrak S_r}\) is exact. Hence, taking
\(\mathfrak S_r\)-invariants commutes with cohomology. Therefore,
\[
\mathcal H^i\!\left(\operatorname{Sym}^r(\mc{C}^\bullet)\right)
=
\mathcal H^i\!\left(((\mc{C}^\bullet)^{\otimes r})^{\mathfrak S_r}\right)
=
\mathcal H^i\!\left((\mc{C}^\bullet)^{\otimes r}\right)^{\mathfrak S_r}.
\]
It follows that
\begin{equation}\label{eq:cohomologySheavesSym}
    \mc{H}^i\left(\Sym^r(\mc{C}^\bullet)\right) = \begin{cases}
    \mc{P}_m^{i'}(\mc{E})\otimes \Sym^{r-i'}(\mc{F})& i = -mi',\\
    0& \text{otherwise}.
\end{cases}
\end{equation}
Similarly, for exterior products, we have
\begin{equation}\label{eq:cohomologySheavesWedge}
    \mc{H}^i\left(\bigwedge^r(\mc{C}^\bullet)\right) = \begin{cases}
    \mc{Q}_m^{i'}(\mc{E})\otimes \bigwedge^{r-i'}(\mc{F})& i = -mi',\\
    0& \text{otherwise}.
\end{cases}
\end{equation}

\section{A Regularity bound}\label{sec:regularityBound}
Recall the complex $\mc{A}^\bullet$ from (\ref{eq:linearStrand}), and let $\widehat{\mc{A}}^\bullet$ be its augmented complex as defined in Section~\ref{sec:prelim-hyper}. Since $\widehat{\mc{A}}^\bullet \otimes \mc{O}_{\PP^n}(j)$ has cohomology sheaves given by
\[\mc{H}^i(\widehat{\mc{A}}^\bullet \otimes \mc{O}_{\PP^n}(j)) = \begin{cases}
    E_p(j) & i = -p,\\
    0 & \text{otherwise,}
\end{cases}\]
it follows from the spectral sequence $\prescript{II}{}E$ that $H^{p+1}(E_p(j)) = \mathbb{H}^1(\widehat{\mc{A}}^\bullet \otimes \mc{O}_{\PP^n}(j))$ for all $j \ge 0$. By Lemma~\ref{lem:hypercohomology}, for all $j \ge 0$, we have
\begin{equation}\label{eq:sheafCoh=S/I}
    H^{p+1}(E_p(j)) =\mathbb{H}^1(\widehat{\mc{A}}^\bullet \otimes \mc{O}_{\PP^n}(j))= \coker(V \otimes S_j \lra S_{j+d}) = (S/I)_{j+d}.
\end{equation}
In particular, when $p = n$, we have
\[(S/I)_d = H^{n+1}(E_n) = 0.\]
It follows that if the resolution of $I$ is virtually linear for $n$ steps, then $I = \m^d$. In the remainder of the paper, we exclude the case $p = n$ and assume throughout that $1 \le p \le n-1$ and $I \ne \m^d$. Hence, by (\ref{eq:regFL}) and (\ref{eq:sheafCoh=S/I}), we have
\[\reg(S/I) = \max \{j \,\colon (S/I)_j \ne 0\} = \max\{j\,\colon H^{p+1}(E_p(j)) \ne 0\} + d.\]
This shows that upper bounds for $\reg(S/I)$ are governed by the vanishing of $H^{p+1}(E_p(j))$,
\begin{equation}\label{eq:cohomologyCriterion}
    \reg(S/I) \le k \quad \iff \quad H^{p+1}(E_p(j)) = 0 \quad \text{for all }j \ge k-d+1.
\end{equation}
Let $\mc{W}^\bullet_l$ denote the $l$-fold exterior product of $\mc{A}^\bullet$.
\begin{lemma}\label{lem:regularityClaim}
    If $\mathbb{H}^{l-1}(\mc{W}^\bullet_l\otimes\mc{O}_{\PP^n}(j)) = 0$, then there is an injection
    \[H^{(l-1)(p+1)}(\mc{Q}^{l-1}_p(E_p)(j+d)) \subseteq H^{l(p+1)}(\mc{Q}^l_p(E_p)(j)).\]
\end{lemma}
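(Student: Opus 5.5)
The idea is to compute the cohomology sheaves of $\mc{W}^\bullet_l\otimes\mc{O}_{\PP^n}(j)$ via (\ref{eq:cohomologySheavesWedge}) and then run the second hypercohomology spectral sequence $\prescript{II}{}E$. Since the complex has only two nonzero cohomology sheaves, the spectral sequence has just two nonzero columns. It therefore has a single possibly nonzero differential, and the vanishing of $\mathbb{H}^{l-1}$ forces that differential to be injective on the relevant term.

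\emph{Step 1: the cohomology sheaves of $\mc{A}^\bullet$.} The complex $\mc{A}^\bullet$ from (\ref{eq:linearStrand}), with $\mc{A}^0 = V\otimes\mc{O}_{\PP^n}$ and $\mc{A}^{-i}=W_i(-i)$, is a bounded complex of vector bundles of length $m=p$. It is exact in degrees $-p+1,\dots,-1$. Its cohomology sheaves are
\begin{itemize}
\item $\mc{H}^{-p}(\mc{A}^\bullet)=\ker(d_p)=E_p$, a vector bundle (it is a kernel of a map of bundles that is surjective onto $E_{p-1}$ by exactness, inductively);
\item $\mc{H}^0(\mc{A}^\bullet)=\coker(d_1)\cong \mathrm{im}(d_0)=\widetilde{I}(d)=\mc{O}_{\PP^n}(d)$, using that $I$ is $\m$-primary.
\end{itemize}
So the hypotheses of the preliminary discussion on exterior products hold with $\mc{E}=E_p$ and $\mc{F}=\mc{O}_{\PP^n}(d)$.

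\emph{Step 2: the cohomology sheaves of $\mc{W}^\bullet_l$.} By (\ref{eq:cohomologySheavesWedge}), $\mc{H}^{-pi'}(\mc{W}^\bullet_l)=\mc{Q}^{i'}_p(E_p)\otimes\bigwedge^{l-i'}\mc{O}_{\PP^n}(d)$, and all other cohomology sheaves vanish. Since $\bigwedge^{k}$ of a line bundle vanishes for $k\ge 2$, only $i'=l$ and $i'=l-1$ survive. After twisting, this gives
\[\mc{H}^{-pl}(\mc{W}^\bullet_l(j))=\mc{Q}^l_p(E_p)(j),\qquad \mc{H}^{-p(l-1)}(\mc{W}^\bullet_l(j))=\mc{Q}^{l-1}_p(E_p)(j+d),\]
and every other cohomology sheaf is zero. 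Because $p\ge 1$, these are two distinct columns, separated by $p$.

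\emph{Step 3: the spectral sequence.} Consider $\prescript{II}{}E_2^{a,b}=H^b(\mc{H}^a(\mc{W}^\bullet_l(j)))\Rightarrow\mathbb{H}^{a+b}$. Only the columns $a=-pl$ and $a=-p(l-1)$ are nonzero. The differential $\prescript{II}{}d_r$ moves $a\mapsto a-r+1$, $b\mapsto b+r$, so the only differential connecting the two columns is $d_{p+1}$. It goes from column $-p(l-1)$ to column $-pl$, and there $E_2=\cdots=E_{p+1}$ and $E_{p+2}=E_\infty$.

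Take $b=(l-1)(p+1)$ in column $-p(l-1)$. The total degree is $-p(l-1)+(l-1)(p+1)=l-1$, and the differential is
\[d_{p+1}\colon H^{(l-1)(p+1)}(\mc{Q}^{l-1}_p(E_p)(j+d))\lra H^{l(p+1)}(\mc{Q}^{l}_p(E_p)(j)).\]
The target has total degree $-pl+l(p+1)=l$, as expected. In column $-p(l-1)$ no differential comes in, since nothing sits to the right. Hence $E_\infty^{-p(l-1),(l-1)(p+1)}=\ker(d_{p+1})$, and this is a subquotient of $\mathbb{H}^{l-1}(\mc{W}^\bullet_l(j))$. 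By hypothesis this hypercohomology is $0$, so $\ker(d_{p+1})=0$ and $d_{p+1}$ is the desired injection.

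The only delicate point is the bookkeeping in Steps 1–2. One must check that $E_p$ is genuinely a vector bundle, so that (\ref{eq:cohomologySheavesWedge}) applies, and identify the surviving cohomology sheaves correctly. After that, the degree count for $d_{p+1}$ is routine.
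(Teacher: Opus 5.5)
Your proposal is correct and is essentially the paper's own argument. You identify the two surviving cohomology sheaves $\mc{Q}^l_p(E_p)(j)$ and $\mc{Q}^{l-1}_p(E_p)(j+d)$ of $\mc{W}^\bullet_l\otimes\mc{O}_{\PP^n}(j)$ via (\ref{eq:cohomologySheavesWedge}), note that the only possible differential in $\prescript{II}{}E$ is $d_{p+1}$ from column $-(l-1)p$ to column $-lp$, and deduce injectivity from the vanishing of the $E_\infty$-term in total degree $l-1$; your extra check that $E_p$ is a vector bundle and that $\mc{H}^0(\mc{A}^\bullet)=\mc{O}_{\PP^n}(d)$ (so the formula applies) fills in a detail the paper leaves implicit.
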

\begin{proof}
     It follows from (\ref{eq:termsWedge}) and (\ref{eq:cohomologySheavesWedge}) that the complex $\mc{W}^\bullet_l$ satisfies
    \begin{align*}
        &\mc{W}^i_l = 0 \quad \text{for all }i< -lp\text{ and }i>0,\\
        &\mc{H}^i(\mc{W}^\bullet_l) = \begin{cases}
            \mc{Q}^l_p(E_p) &i = -lp\\
            \mc{Q}^{l-1}_p(E_p)(d) &i = -(l-1)p\\
            0 &\text{otherwise}.
        \end{cases}
    \end{align*}
    Since $\mc{W}_l^\bullet$ is exact except in two cohomological degrees, the only possible nonzero differentials in the spectral sequence $\prescript{II}{}{}E$ are of the form
    \[\prescript{II}{}{}d_{p+1} \colon \prescript{II}{}E_{p+1}^{-(l-1)p,q} \lra \prescript{II}{}E_{p+1}^{-lp,q+p+1} \quad \text{for some }q.\]
    In particular, the assumption that $\mathbb{H}^{l-1}(\mc{W}^\bullet_l\otimes\mc{O}_{\PP^n}(j)) = 0$ implies that $\prescript{II}{}E_{\infty}^{-(l-1)p,(l-1)(p+1)}=\prescript{II}{}E_{p+2}^{-(l-1)p,(l-1)(p+1)} = 0$. Equivalently, the map
    \[\begin{array}{cccc}
       \prescript{II}{}{}d_{p+1} \colon & \prescript{II}{}E_{p+1}^{-(l-1)p,(l-1)(p+1)} & \lra &  \prescript{II}{}E_{p+1}^{-lp,l(p+1)} \\
        & \|& & \|\\
        &  H^{(l-1)(p+1)}(\mc{Q}^{l-1}_p(E_p)(j+d)) & &H^{l(p+1)}(\mc{Q}^l_p(E_p)(j))
    \end{array}\]
    is injective as desired.
\end{proof}

\begin{lemma}\label{lem:cohomologyLadder}
    For $t = \left\lceil \frac{n+1}{p+1}\right\rceil - 1$, we have an injection
    \[H^{p+1}(E_p(j)) \subseteq H^{t(p+1)}(\mc{Q}_p^t(E_p)(j-(t-1)d)) \quad \text{for all }j.\]
\end{lemma}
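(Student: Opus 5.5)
The plan is to chain the injections of Lemma~\ref{lem:regularityClaim} for $l = 2,\dots,t$. Applied with twist $j-(l-1)d$, that lemma gives
\[H^{(l-1)(p+1)}(\mc{Q}^{l-1}_p(E_p)(j-(l-2)d)) \subseteq H^{l(p+1)}(\mc{Q}^{l}_p(E_p)(j-(l-1)d)),\]
provided that $\mathbb{H}^{l-1}(\mc{W}^\bullet_l\otimes\mc{O}_{\PP^n}(j-(l-1)d)) = 0$. Since $\mc{Q}^1_p(E_p)=E_p$, composing these injections for $l=2,\dots,t$ yields exactly the claimed injection. When $t=1$ the statement is trivial, and when $t = 0$, i.e.\ $p+1 \ge n+1$, the case is excluded. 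So everything reduces to proving the hypercohomology vanishing for $2\le l\le t$ and for all twists $j' = j-(l-1)d$. If $j'$ is negative, I would first restrict to the range of $j$ that will actually be used, namely $j\ge$ something. However, the statement says ``for all $j$'', so I will aim for a proof that works for every $j'$.

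To prove the vanishing, I would use the first spectral sequence $\prescript{I}{}E$, with $E_1^{i,q} = H^q(\mc{W}_l^{i}(j'))$ converging to $\mathbb{H}^{i+q}$. By (\ref{eq:termsWedge}), each term $\mc{W}_l^{i}$ is a direct sum of tensor products of $\mc{Q}^{a_k}_k(W_k)\otimes\mc{O}(-k a_k)$. That is, $\mc{W}^{i}_l$ is a sum of copies of $\mc{O}_{\PP^n}(i)$, since the total twist is $-\sum k a_k = i$. Thus $\mc{W}^{i}_l(j') = U_i\otimes \mc{O}_{\PP^n}(i+j')$ with $-lp\le i\le 0$. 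Hypercohomology in total degree $l-1$ collects the contributions $H^q(\mc{O}(i+j'))$ with $i+q = l-1$, i.e.\ $q = l-1-i \in [l-1, l-1+lp]$. Only $q=0$ and $q=n$ can contribute on $\PP^n$. The case $q=0$ would require $i = l-1 >0$, which is impossible. The case $q=n$ requires $i = l-1-n$ and $i+j' \le -n-1$, i.e.\ $j'\le -l$. We need $i\ge -lp$, i.e.\ $n \le l(p+1)-1$. But $l\le t = \lceil (n+1)/(p+1)\rceil -1 < (n+1)/(p+1)$, so $l(p+1) < n+1$, giving $l(p+1)-1 < n$. Hence $i=l-1-n<-lp$, and this term of $\mc{W}_l$ is zero. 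Therefore $E_1^{i,q}=0$ whenever $i+q=l-1$, and $\mathbb{H}^{l-1}(\mc{W}^\bullet_l(j'))=0$ for every $j'$.

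The main obstacle is exactly this degree bookkeeping. I must confirm that $\mc{W}^i_l$ vanishes for $i<-lp$, which is stated in the proof of Lemma~\ref{lem:regularityClaim}. I must also confirm that every summand in degree $i$ has twist precisely $\mc{O}(i)$, so that only top cohomology $H^n$ can interfere. The inequality $l(p+1)\le n$ that follows from the choice of $t$ is what rules that interference out. Once this is checked, the chain of injections from Lemma~\ref{lem:regularityClaim} gives the result directly.
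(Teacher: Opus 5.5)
Your proposal is correct and follows the paper's proof. You chain the injections of Lemma~\ref{lem:regularityClaim} for $2\le l\le t$, with the twists made explicit, and prove $\mathbb{H}^{l-1}(\mc{W}^\bullet_l\otimes\mc{O}_{\PP^n}(j'))=0$ by the same degree count the paper uses: $\mc{W}^i_l$ is a sum of copies of $\mc{O}_{\PP^n}(i)$ for $-lp\le i\le 0$, and $l(p+1)\le n$ forces $l-1<n-lp$, so hypercohomology can only live in degrees $-lp,\dots,0$ and $n-lp,\dots,n$.
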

\begin{proof}
    It suffices to show that for every $2 \le l < (n+1)/(p+1)$, we have 
    \begin{equation}\label{eq:cohoLadderClaim}
        H^{(l-1)(p+1)}(\mc{Q}_p^{l-1}(E_p)(j+d)) \subseteq H^{l(p+1)}(\mc{Q}_p^l(E_p)(j)) \quad \text{for all }j.
    \end{equation}
    By Lemma~\ref{lem:regularityClaim}, it suffices to show that
    \begin{equation}\label{eq:vanishingHypercohomologyRegularity}
        \mathbb{H}^{l-1}(\mc{W}^\bullet_l\otimes\mc{O}_{\PP^n}(j)) = 0 \quad \text{for all }2 \le l < (n+1)/(p+1).
    \end{equation}
    Indeed, since the terms in $\mc{W}^\bullet_l$ are direct sums of line bundles by (\ref{eq:termsWedge}), it follows from \cite{Hartshorne}*{Theorem~III.5.1} and the spectral sequence $\prescript{I}{}E$ that the only cohomological degrees in which the hypercohomology may be nonzero for $\mc{W}^\bullet_l\otimes\mc{O}_{\PP^n}(j)$ are
    \[-lp,-lp+1,\dots,-1,0,n-lp,n-lp+1,\dots,n.\]
    Since $2 \le l < (n+1)/(p+1)$, we have 
    \[0 < l-1 < n-lp.\]
    This concludes the proof.
\end{proof}
\begin{proof}[Proof of Theorem~\ref{thm:regularityBound}]
    By Lemma~\ref{lem:cohomologyLadder} and (\ref{eq:cohomologyCriterion}), it suffices to show
    \begin{equation}\label{eq:regBoundClaim}
        H^{t(p+1)}(\mc{Q}^t_p(E_p)(j)) = 0 \quad \text{for all }j \ge d - t \quad \text{where }t = \left\lceil \frac{n+1}{p+1}\right\rceil - 1.
    \end{equation}
    Consider $\mc{W}^\bullet_{t+1}$, the $(t+1)$-fold exterior product of $\mc{A}^\bullet$. We claim that
    \begin{equation}\label{eq:regularityClaimVanishing}
        \mathbb{H}^{t}(\mc{W}^\bullet_{t+1}\otimes\mc{O}_{\PP^n}(j-d)) = 0
    \end{equation}
    Assuming (\ref{eq:regularityClaimVanishing}) for the moment, it follows from Lemma~\ref{lem:regularityClaim} and Grothendieck vanishing that
    \[H^{t(p+1)}(\mc{Q}^t_p(E_p)(j)) \subseteq  H^{(t+1)(p+1)}(\mc{Q}^{t+1}_p(E_p)(j-d)) = 0\]
    because $(t+1)(p+1) > n$. We now prove the claim (\ref{eq:regularityClaimVanishing}). By (\ref{eq:termsWedge}), the terms of $\mc{W}^\bullet_{t+1}$ are direct sums of line bundles. In particular, they have vanishing intermediate sheaf cohomology. Consider the spectral sequence $\prescript{I}{}{}E$. It follows that $\mathbb{H}^{t}(\mc{W}^\bullet_{t+1}\otimes\mc{O}_{\PP^n}(j-d))$ is a subquotient of $H^n(\mc{W}^{t-n}_{t+1}\otimes\mc{O}_{\PP^n}(j-d))$. Hence, it suffices to show $H^n(\mc{W}^{t-n}_{t+1}\otimes\mc{O}_{\PP^n}(j-d)) = 0$. Since $\mc{W}^{t-n}_{t+1}$ is a direct sum of $\mc{O}_{\PP^n}(t-n)$ by (\ref{eq:termsWedge}) and $t-n+j-d \ge -n$, we have
    \[H^n(\mc{W}^{t-n}_{t+1}\otimes\mc{O}_{\PP^n}(j-d)) = 0.\]
    This concludes the proof.
\end{proof}
\begin{example}\label{ex:sharp}
    We present examples showing that Theorem~\ref{thm:regularityBound} is sharp in several cases.
    \begin{enumerate}
        \item $p = 0$. \quad Take $I = (x_0^d,\cdots,x_n^d)$.
        \item $p = 1$. \quad Take $I = I_{d,n}$ from Example~\ref{ex:Idn}.
        \item $ (n-1)/2 \le p \le n-1$. \quad Let $C \subseteq \PP(H^0(C,L))$ be a smooth projective curve of genus $g = n - p$ embedded via the complete linear series of a line bundle $L$ such that
        \[\deg(L) = 2g + 2(d-2) + 1 + p.\]
        By \cite{ENP}*{Theorem~1.2}, the $(d-2)$-nd secant variety $\Sigma_{d-2}(C)$ of $C$ is arithmetically Cohen-Macaulay. Take $I$ to be an Artinian reduction of the ideal $I(\Sigma_{d-2}(C))$. $I$ is generated in degree $d$, and by \cite{ENP}*{Theorem~1.2}, the resolution of $I$ is linear for $p$ steps with $\reg(S/I) = 2d-2$.
        \item $p = n$. \quad Take $I = \m^d$.
    \end{enumerate}
\end{example}

\section{Powers of an ideal}
Let $\mc{S}^\bullet_t$ be the $t$-fold symmetric product of $\mc{A}^\bullet$ from (\ref{eq:linearStrand}), and $\widehat{\mc{S}}^\bullet_t$ its augmented complex.
\begin{lemma}\label{lem:H1Sk}
    If $t \ge (n/p) - 1$, then $\mathbb{H}^1(\widehat{\mc{S}}^\bullet_{t+1}\otimes \mc{O}_{\PP^n}(j)) = \mathbb{H}^1(\widehat{\mc{S}}^\bullet_{t}\otimes \mc{O}_{\PP^n}(j+d))$ for all $j$.
\end{lemma}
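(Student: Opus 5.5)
The plan is to compare the two augmented complexes through a filtration of the symmetric power. The difference between them should be a single complex whose hypercohomology is sheaf cohomology of $\mc{P}^{t+1}_p(E_p)$ in degrees above $n$, which vanishes by Grothendieck vanishing.

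First I will set up the bookkeeping. By (\ref{eq:cohomologySheavesSym}), applied to $\mc{A}^\bullet$ with $m=p$, $\mc{E}=E_p$ and $\mc{F}=\mc{O}_{\PP^n}(d)$, the complex $\mc{S}^\bullet_{t+1}$ has cohomology sheaves $\mc{P}^{i}_p(E_p)\otimes\mc{O}_{\PP^n}((t+1-i)d)$ in degree $-pi$, for $0\le i\le t+1$. Augmenting removes exactly the $i=0$ piece $\mc{O}_{\PP^n}((t+1)d)$. Likewise $\widehat{\mc{S}}^\bullet_t\otimes\mc{O}_{\PP^n}(d)$ has the pieces $\mc{P}^{i}_p(E_p)((t+1-i)d)$ in degree $-pi$ for $1\le i\le t$. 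So the two augmented complexes have the same cohomology sheaves, except that $\widehat{\mc{S}}^\bullet_{t+1}$ has the extra piece $\mc{P}^{t+1}_p(E_p)$ in degree $-p(t+1)$.

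To make this comparison rigorous, I will write $\mc{K}^\bullet=\widehat{\mc{A}}^\bullet[-1]$, the cocone of $\theta\colon\mc{A}^\bullet\to\mc{O}_{\PP^n}(d)$, so that $\mc{K}^\bullet\simeq E_p[p]$. Then $\mc{A}^\bullet$ is an extension of $\mc{O}_{\PP^n}(d)$ by $\mc{K}^\bullet$. I will filter $\Sym^{t+1}(\mc{A}^\bullet)$ by the subcomplexes $F_k$, the images of $\Sym^{k}(\mc{K}^\bullet)\otimes\Sym^{t+1-k}(\mc{A}^\bullet)$. Over $\CC$ the associated graded pieces are $\Sym^k(\mc{K}^\bullet)\otimes\mc{O}_{\PP^n}((t+1-k)d)$, by the same $\mathfrak S$-invariants argument used for (\ref{eq:cohomologySheavesSym}).

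The key claim is that there is an exact triangle, up to the shift conventions for augmented complexes,
\[
\Sym^{t+1}(\mc{K}^\bullet)\otimes\mc{O}_{\PP^n}(j)\lra \widehat{\mc{S}}^\bullet_{t+1}\otimes\mc{O}_{\PP^n}(j)\lra \widehat{\mc{S}}^\bullet_{t}\otimes\mc{O}_{\PP^n}(j+d)\xrightarrow{+1}.
\]
Here $\widehat{\mc{S}}^\bullet_{t+1}$ corresponds to $F_1$, and the quotient $F_1/F_{t+1}$ is identified with $\widehat{\mc{S}}^\bullet_t(d)$ by comparing the two filtrations level by level. Granting the triangle, I take the long exact sequence in hypercohomology. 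The outer terms $\mathbb{H}^{1}$ and $\mathbb{H}^{2}$ of $\Sym^{t+1}(\mc{K}^\bullet)(j)\simeq \mc{P}^{t+1}_p(E_p)(j)[p(t+1)]$ are $H^{p(t+1)+1}$ and $H^{p(t+1)+2}$ of a sheaf on $\PP^n$. The hypothesis $t\ge n/p-1$ gives $p(t+1)\ge n$, so both degrees exceed $n$ and these groups vanish by Grothendieck vanishing. Hence the middle map is an isomorphism on $\mathbb{H}^1$, which is the statement.

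The main obstacle is constructing this triangle honestly at the level of complexes, with the correct shifts. An alternative is to avoid the triangle and argue directly with the spectral sequence $\prescript{II}{}E$ for both complexes. The $E_2$ terms contributing to $\mathbb{H}^1$ and $\mathbb{H}^2$ agree except for $H^{p(t+1)+1}$ and $H^{p(t+1)+2}$ of $\mc{P}^{t+1}_p(E_p)$, which vanish. But the differentials must also be matched, which still needs a map of complexes. So I would first try to write down the natural map $\widehat{\mc{S}}^\bullet_{t+1}\to\widehat{\mc{S}}^\bullet_t(d)$ induced by contracting one factor through $\theta$, namely $\Sym^{t+1}\mc{A}\to\Sym^t\mc{A}\otimes\mc{A}\to\Sym^t\mc{A}\otimes\mc{O}(d)$. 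I would then check that its cone has cohomology sheaf only $\mc{P}^{t+1}_p(E_p)$ in degree $-p(t+1)$, using (\ref{eq:cohomologySheavesSym}) and the fact that this map realizes the identity on the matching pieces for $1\le i\le t$.
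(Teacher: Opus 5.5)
Your argument is essentially the paper's proof. The paper realizes your triangle as an honest short exact sequence $0\to\Sym^{t+1}(\mc{B}^\bullet)\to\widehat{\mc{S}}^\bullet_{t+1}\to\widehat{\mc{S}}^\bullet_{t}\otimes\mc{O}_{\PP^n}(d)\to 0$, with the quotient map being your comultiplication-then-$\theta$ map, and then kills $\mathbb{H}^1,\mathbb{H}^2$ of $\Sym^{t+1}(\mc{B}^\bullet)(j)$ by Grothendieck vanishing exactly as you do.

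The one fix is the choice of $\mc{K}^\bullet$: take it to be the kernel subcomplex $\mc{B}^\bullet$ of $\mc{A}^\bullet\to\mc{O}_{\PP^n}(d)$, with $M_V$ in degree $0$, rather than the cocone $\widehat{\mc{A}}^\bullet$. The cocone maps onto $\mc{A}^\bullet$ instead of sitting inside it, so your filtration by images would be trivial. Since $\theta$ is surjective the two are quasi-isomorphic, and with the kernel the filtration goes through directly. The passage to augmented complexes then uses $\mc{H}^0=\mc{H}^1=0$ for $\Sym^{t+1}(\mc{B}^\bullet)$.
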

\begin{proof}
Let $\mc{B}^\bullet$ be the following complex
\[0 \lra W_{p}(-p) \lra \cdots \lra W_1(-1) \lra M_V \lra 0.\] 
Note that the only nonzero cohomology sheaves for $\mc{A}^\bullet$ and $\mc{B}^\bullet$ are 
\[\mc{H}^{-p}(\mc{A}^\bullet) = \mc{H}^{-p}(\mc{B}^\bullet) = E_{p} \quad\text{and}\quad  \mc{H}^0(\mc{A}^\bullet) = \mc{O}_{\PP^n}(d).\]
Consider the short exact sequence
\begin{equation}\label{eq:BAses}
    0 \lra \mc{B}^\bullet \lra \mc{A}^\bullet \lra \mc{O}_{\PP^n}(d) \lra 0.
\end{equation}
For every $r \ge 1$, we can take the $r$-fold symmetric product of the inclusion $\mc{B}^\bullet \lra \mc{A}^\bullet$ and get the following short exact sequence
\begin{equation}\label{eq:powersLemmaSym}
    0 \lra \Sym^{r}(\mc{B}^\bullet) \lra \mc{S}^\bullet_{r} \lra \mc{S}^\bullet_{r-1} \otimes \mc{O}_{\PP^n}(d) \lra 0.
\end{equation}
It follows from (\ref{eq:cohomologySheavesSym}) that $\Sym^{r}(\mc{B}^\bullet)$ has cohomology
\[\mc{H}^i(\Sym^{r}(\mc{B}^\bullet)) = \begin{cases}
    \mc{P}_{p}^{r}(E_{p}) & i = -rp,\\
    0 &\text{otherwise},
\end{cases}\]
In particular, $\mc{H}^0(\Sym^{r}(\mc{B}^\bullet)) = \mc{H}^1(\Sym^{r}(\mc{B}^\bullet)) = 0$. It follows that (\ref{eq:powersLemmaSym}) extends to a short exact sequence of augmented complexes
\[0 \lra \Sym^{r}(\mc{B}^\bullet) \lra \widehat{\mc{S}}^\bullet_{r} \lra \widehat{\mc{S}}^\bullet_{r-1} \otimes \mc{O}_{\PP^n}(d) \lra 0.\]
The corresponding long exact sequence in hypercohomology gives $\mathbb{H}^1(\widehat{\mc{S}}^\bullet_{t+1}\otimes \mc{O}_{\PP^n}(j)) = \mathbb{H}^1(\widehat{\mc{S}}^\bullet_{t}\otimes \mc{O}_{\PP^n}(j+d))$ provided that
\begin{equation}\label{eq:vanishingCohomologyClaim}
    \mathbb{H}^1(\Sym^{t+1}(\mc{B}^\bullet)\otimes \mc{O}_{\PP^n}(j)) = \mathbb{H}^2(\Sym^{t+1}(\mc{B}^\bullet)\otimes \mc{O}_{\PP^n}(j)) = 0.
\end{equation}
We now prove (\ref{eq:vanishingCohomologyClaim}). 
Since $\Sym^{t+1}(\mc{B}^\bullet)$ is exact except in one cohomological degree, it follows from the spectral sequence $\prescript{II}{}{}E$ that
\begin{align*}
    \mathbb{H}^1(\Sym^{t+1}(\mc{B}^\bullet)\otimes \mc{O}_{\PP^n}(j)) &= H^{(t+1)p+1}(\mc{P}_{p}^{t+1}(E_{p}) (j)),\\
    \mathbb{H}^2(\Sym^{t+1}(\mc{B}^\bullet)\otimes \mc{O}_{\PP^n}(j)) &= H^{(t+1)p+2}(\mc{P}_{p}^{t+1}(E_{p}) (j)).
\end{align*}
By Grothendieck vanishing, since $(t+1)p \ge n$, we have
\[H^{(t+1)p+1}(\mc{P}_{p}^{t+1}(E_{p}) (j)) = H^{(t+1)p+2}(\mc{P}_{p}^{t+1}(E_{p}) (j)) = 0.\]
This proves (\ref{eq:vanishingCohomologyClaim}) and concludes the proof.
\end{proof}
\begin{proof}[Proof of Theorem~\ref{thm:linearBound}]
    By (\ref{eq:termsSym}), the terms of $\mc{S}_r^\bullet$ are given by
    \[\mc{S}_r^i = V_i \otimes \mc{O}_{\PP^n}(i)\quad \text{where } V_i = \bigoplus_{\substack{a_0,\dots,a_p \ge 0\\ a_0 + \cdots + a_p = r\\ a_1 + 2a_2 + \cdots + pa_p = -i}} \bigotimes_{j = 0}^p \mc{P}_j^{a_j}(\mc{A}^{-j}).\]
    It follows from Lemma~\ref{lem:hypercohomology} that
    \[(S/I^r)_{rd} = \coker(\Sym^r V \lra S_{rd}) = \mathbb{H}^1(\widehat{\mc{S}}^\bullet_{r}).\]
Since both $I^r$ and $\m^{rd}$ are generated in degree $rd$, it suffices to show $(I^r)_{rd} = (\m^{rd})_{rd} = S_{rd}$ which is equivalent to $(S/I^r)_{rd} = 0$. Hence, it suffices to show that for $t \ge  \left\lfloor\frac{n}{p+1}\right\rfloor + \left\lfloor\frac{n-1}{p}\right\rfloor $, we have $\mathbb{H}^1(\widehat{\mc{S}}^\bullet_{t})  = 0$. Let $k = \left \lceil \frac{n-p}{p}\right \rceil = \left\lfloor\frac{n-1}{p}\right\rfloor$, by Lemma~\ref{lem:H1Sk}, we have
    \begin{equation}\label{eq:hypercohomologyEquality}
        \mathbb{H}^1(\widehat{\mc{S}}^\bullet_{t})  = \mathbb{H}^1(\widehat{\mc{S}}^\bullet_{t-1}(d)) = \cdots = \mathbb{H}^1\left(\widehat{\mc{S}}^\bullet_{k}\left((t-k)d\right)\right). 
    \end{equation}
    It follows from Lemma~\ref{lem:hypercohomology} that
    \begin{equation}\label{eq:HypercohomologyToS/Ik}
        \mathbb{H}^1\left(\widehat{\mc{S}}^\bullet_{k}\left((t-k) d\right)\right) =\coker(\Sym^k V \otimes S_{(t-k)d} \lra S_{td}) =  (S/I^k)_{td}.
    \end{equation}
    By Theorem~\ref{thm:regularityBound} and Lemma~\ref{lem:regofPower}, we have
    \[\reg(I^k) \le \reg(I) + (k-1)d \le \left \lceil\frac{n+1}{p+1}\right\rceil (d - 1) + 1 + (k-1)d\le \left(\left\lfloor\frac{n}{p+1}\right\rfloor + k \right) d \le td.\]
    By (\ref{eq:regFL}), (\ref{eq:hypercohomologyEquality}), and (\ref{eq:HypercohomologyToS/Ik}), we have
    \[\mathbb{H}^1(\widehat{\mc{S}}^\bullet_{t}) = \mathbb{H}^1\left(\widehat{\mc{S}}^\bullet_{k}\left((t-k) d\right)\right) = (S/I^k)_{td} = 0.\]
    This proves the theorem.
\end{proof}

%\section{Minimal generators of $I$}

\end{document}